\declaretheorem[name=Theorem,numberwithin=section]{thm}
\newtheorem{theorem}[thm]{Theorem}
\newtheorem{lemma}[thm]{Lemma}
\newtheorem{corollary}[thm]{Corollary}
\newtheorem{prop}[thm]{Proposition}
\theoremstyle{definition}
\newtheorem{definition}[thm]{Definition}
\newtheorem{example}[thm]{Example}
\numberwithin{equation}{section}
\newcommand{\Fp}{\mathbb{F}_p}
\newcommand{\Zp}{\mathbb{Z}_p}
\newcommand{\Zptimes}{\mathbb{Z}_p^\times}
\DeclareMathOperator{\ordp}{ord}
\newcommand{\floor}[1]{\left \lfloor #1 \right \rfloor}
\begin{document}

\date{\today}
\title{A determinant of the Artin-Hasse exponential coefficients}
\author{Matthew Schmidt}
\email{mwschmid@buffalo.edu}
\subjclass[2020]{05A19(primary), 05A05, 	05A10, 11C20, 11S80}
\keywords{Artin-Hasse exponential, Young tableaux, determinant}

\begin{abstract}
We present a new determinant identity involving the coefficients of the Artin-Hasse exponential. In particular, if $E(x) = \exp(\sum_{k=0}^\infty \frac{x^{p^k}}{p^k}) = \sum_{n=0}^\infty u_nx^n$ is the Artin-Hasse exponential, we give, for any $\ell\geq 1$, a closed-form formula for the determinant $|u_{pi-j}|_{1\leq i,j\leq \ell}$ and show it is a $p$-adic unit.
\end{abstract}

\maketitle 

\numberwithin{theorem}{section}

\section{Introduction}

Define the Artin-Hasse exponential:
\begin{align}\label{line:AH}
	E(x) = \exp(\sum_{k=0}^\infty \frac{x^{p^k}}{p^k}) = \sum_{n=0}^\infty u_nx^n\in (\Zp\cap\mathbb{Q})[[x]].
\end{align}
This exponential, and variations of it, play an essential role in the Dwork theory of exponential sums and the explicit computation of $L$-functions over finite fields. Its coefficients also have a rich combinatorial interpretation which will be the focus of this paper. Let $h_n$ be the number of $p$-elements in the symmetric group $S_n$. It can be shown (Lemma~2.11 in \cite{Kracht}) that
\[
	u_n = \frac{h_n}{n!}. 
\]

To date, there are few explicit results involving these coefficients. 
One can use the power series definition from Equation~(\ref{line:AH}) and compute an explicit expansion of $u_n$ (see Equation~(2.9) in \cite{Kracht}).  It is also known that the coefficients satisfy a recursive formula (Lemma~1 in \cite{KS}):
\[
	u_n = \frac{1}{n}\sum_{i=0}^\infty u_{n-p^i},
\]
where we initially take $u_0=1$.  More generally,  Conrad has shown (see the proof of Theorem~2.10 in \cite{KConrad}) that the Artin-Hasse exponental $\bmod\ p$ is not rational over $\Fp$, that is,  $(E(x)\bmod\ p)\not\in \Fp(x)$.

Other properties about the coefficients are less resolved, and the main mystery is how the coefficients behave $p$-adically.
Conrad has  conjectured (Section~5 of \cite{KConrad}) that they are uniformly distributed in $\Zp$ with respect to the Haar measure.  Thakur \cite{Thakur} has also asked whether the Artin-Hasse exponential $\bmod\ p$ is transcendental over $\Fp(x)$.  In terms of the coefficients, Thakur's question is whether or not the set of sequences of coefficients $\bmod\ p$,  
\begin{align*}\label{line:pkernel}
	\{({u}_{p^in+j}\bmod\ p)_{n\geq 0} | i\geq 0\textrm{ and } 0\leq j<p^i\},
\end{align*}
called the $p$-kernel of the sequence $(u_n\bmod\ p)_{n\geq 0}$, is finite.  See Section~6.6 in \cite{Allouche} for further information on $p$-kernels and Section~12.2 for automatic sequences over finite fields in general.

In this paper, we present a new identity involving the Artin-Hasse exponential coefficients.  Before we state our result,  let us motivate it.  Define the Cartier operator on $\Zp[[x]]$ by:
\begin{align*}
	U_p: \Zp[[x]] &\to \Zp[[x]]\\
	\sum_{n=0}^\infty a_nx^n &\mapsto \sum_{n=0}^\infty a_{pn}x^n,
\end{align*}
and construct an operator:
\begin{align*}
	\phi: \Zp[[x]]\ &\to \Zp[[x]] \\
		f(x) &\mapsto U_p(E(x)\cdot f(x)),
\end{align*}
where the action inside the $U_p$ is just multiplication.  The Cartier operator appears in both the Dwork theory of exponential sums (Chapter 5, \S 3 of \cite{Koblitz}) and Christol's theory of automatic sequences over finite fields  (Section~12.2 of \cite{Allouche}).  The matrix of this $\phi$ operator with respect to the basis $\{1, x, x^2, \cdots\}$ is then:
\[	
	\begin{bmatrix}
		1 & u_p & u_{2p} & u_{3p} &  \\
		0 & u_{p-1} & u_{2p-1} & u_{3p-1} & \cdots \\
		0 & u_{p-2} & u_{2p-2} & u_{3p-2} &  \\
		 & & \vdots & 
	\end{bmatrix}.
\] 
Our main result is the explicit computation of the leading principal minors of the transpose of this matrix.   Given the general difficulty in dealing with the coefficients $u_n$, especially when $n>p-1$,  it is surprising that a closed-form formula for these determinants exists. Explicitly, 
\begin{theorem}\label{thm:main}
For any $1\leq \ell$,
\[
	|u_{pi-j}|_{1\leq i,j\leq \ell} = \prod_{k=1}^\ell\frac{k!p^k}{(pk)!}\in\Zptimes,
\]
where we take $u_n=0$ if $n<0$.
\end{theorem}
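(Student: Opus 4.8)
The plan is to factor the matrix $(u_{pi-j})_{1\le i,j\le\ell}$ as a unit lower‑triangular matrix times a matrix of reciprocal factorials, and then to evaluate the latter by Vandermonde‑type manipulations. The starting point is the factorization $E(x)=e^x\cdot\widetilde E(x)$, where
\[
  \widetilde E(x)=\frac{E(x)}{e^x}=\exp\Bigl(\sum_{k\ge 1}\frac{x^{p^k}}{p^k}\Bigr).
\]
The key observation is that every power of $x$ occurring in this exponent is divisible by $p$, so $\widetilde E(x)=\sum_{m\ge 0}c_m x^{pm}$ is a power series in $x^p$ with $c_0=1$ (the $c_m$ lie in $\mathbb Q$, which is all we use; we do not claim $\widetilde E\in\Zp[[x]]$). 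Comparing coefficients of $x^n$ on both sides of $E=e^x\widetilde E$ gives $u_n=\sum_{m}c_m/(n-pm)!$ with the convention $1/m!=0$ for $m<0$, consistent with $u_n=0$ for $n<0$. Writing $n=pi-j$ and setting $r=i-m$ turns this into $u_{pi-j}=\sum_{r}c_{i-r}/(pr-j)!$.

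I would then read this as the matrix identity $M=CN$, where $C_{ir}=c_{i-r}$ (with $c_m=0$ for $m<0$) is unit lower‑triangular Toeplitz and $N_{rj}=1/(pr-j)!$. Since $C$ is lower‑triangular, the top‑left $\ell\times\ell$ block of $M$ is the product of the corresponding blocks of $C$ and $N$, so
\[
  |u_{pi-j}|_{1\le i,j\le\ell}=\det\bigl(C^{(\ell)}\bigr)\det\bigl(N^{(\ell)}\bigr)=\det\Bigl(\tfrac{1}{(pi-j)!}\Bigr)_{1\le i,j\le\ell}.
\]
At this point the coefficients $u_n$ have disappeared and the problem is purely one about reciprocal factorials.

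To compute the remaining determinant I would use $1/(pi-j)!=(pi)^{\underline{j}}/(pi)!$, where $t^{\underline{j}}=t(t-1)\cdots(t-j+1)$ is a monic polynomial in $t$ of degree $j$; this identity holds even in the degenerate range $pi<j$ (possible once $\ell\ge p$), where both sides vanish. Factoring $1/(pi)!$ out of the $i$th row leaves $\det\bigl((pi)^{\underline{j}}\bigr)_{1\le i,j\le\ell}$, and since $t^{\underline{1}},\dots,t^{\underline{\ell}}$ differ from the monomials $t,\dots,t^{\ell}$ by a unipotent triangular change of columns, this equals $\det\bigl((pi)^{j}\bigr)_{1\le i,j\le\ell}$. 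That is a Vandermonde‑type determinant; pulling $pi$ out of row $i$ and evaluating gives $p^{\ell}\ell!\cdot p^{\binom{\ell}{2}}\prod_{1\le a<b\le\ell}(b-a)$. Using $\prod_{a<b}(b-a)=\prod_{k=1}^{\ell-1}k!$ and $\ell+\binom{\ell}{2}=1+2+\cdots+\ell$, this telescopes to $\prod_{k=1}^{\ell}\frac{p^k k!}{(pk)!}$. The $p$-adic unit claim is then immediate: $v_p\bigl(p^k k!/(pk)!\bigr)=k+v_p(k!)-v_p((pk)!)=0$ because $v_p((pk)!)=k+v_p(k!)$.

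The substantive step is the first one — recognizing that dividing $E$ by $e^x$ converts $(u_{pi-j})$ into a triangular matrix times a factorial matrix; after that, no arithmetic information about the $u_n$ is needed. The rest is routine determinant bookkeeping, the only delicate points being the sign and column‑ordering conventions in the Vandermonde step and checking that $M=CN$ (equivalently the falling‑factorial identity) stays valid in the range $pi<j$.
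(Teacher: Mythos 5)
Your argument is correct, and it takes a genuinely different route from the paper's. The paper obtains the decomposition $u_{pi-j}=\sum_k \frac{C_k}{(pk)!}\cdot\frac{1}{(p(i-k)-j)!}$ combinatorially, by classifying the $p$-elements of $S_n$ by cycle type (its constants $C_k/(pk)!$ are precisely your $c_k$), whereas your factorization $E(x)=e^x\widetilde E(x)$ with $\widetilde E$ a power series in $x^p$ produces the same unit-lower-triangular-times-factorial structure in two lines with no combinatorics. The larger divergence is in evaluating the residual determinant: the paper passes from $|1/(pi-j)!|$ to the binomial determinant $|\binom{pi}{j}|$, invokes Tonne's theorem to write that as $p^\ell$ times a count of staircase Young tableaux, and devotes its Section~2 to showing that count is $p^{\sum_{k=1}^{\ell-1}k}$; you instead compute $\det\bigl(1/(pi-j)!\bigr)$ directly via $1/(pi-j)!=(pi)^{\underline{j}}/(pi)!$, the unipotent change from falling factorials to monomials, and a Vandermonde determinant at the nodes $p,2p,\dots,\ell p$. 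Your route is self-contained and more elementary --- it in effect re-proves the instance of Tonne's theorem the paper needs, since $|\binom{pi}{j}|=\prod_i(pi)!\cdot\prod_j(1/j!)\cdot|1/(pi-j)!|=p^{\ell(\ell+1)/2}$ --- at the cost of losing the combinatorial content (tableaux counts, $p$-elements of $S_n$) that is part of the paper's interest. The boundary cases you flag all check out: for $pi<j$ every term $1/(pr-j)!$ with $r\le i$ vanishes, matching the convention $u_{pi-j}=0$, and $(pi)^{\underline{j}}$ contains the factor $pi-pi=0$ when $j>pi$, so the falling-factorial identity holds there too.
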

To prove Theorem~\ref{thm:main}, we reduce the determinant $|u_{pi-j}|_{1\leq i,j\leq \ell}$ to one involving only binomials. Then, using a result of Tonne \cite{Tonne}, we are able to explicitly compute the determinant by counting certain types of Young tableaux. 

This paper has three main sections. The first section is essentially independent from the rest of the paper,  and in it we count the Young tableaux needed in order to get a closed-form formula from Tonne's theorem.  In the second section, we compute an expansion of $h_n$, Lemma~\ref{lemma:h_n_expans}, which we use in the third section after exploiting a determinant trick, Lemma~\ref{lemma:deter}, to finally prove Theorem~\ref{thm:main}.


\section{Young Tableaux}\label{section:young}

We say a Young tableaux $T$ has shape $(\lambda_1, \lambda_2, \cdots, \lambda_n)$ if it has $n$ rows and the $k$th row has $\lambda_k$ blocks in it. All of our Young tableaux will contain integers in each block and we will use the notation $(T)_{i,j}$ to refer to the integer in the $j$th block in the $i$th row.

\begin{example}
Let $T$ be the Young tableaux 
\[
	\young(123,45,6).
\]
Then $T$ has shape $(3,2,1)$ and, for example, $(T)_{1,2}=2$ and $(T)_{2,2}=5$.
\end{example}

\begin{definition}
Let $T_n$ be the set of all Young tableaux $T$ such that:
\begin{enumerate}
	\item $T$ has shape $(n, n-1,\cdots, 1)$.
	\item For each block, $i,j$: \[
			1\leq (T)_{i,j} \leq p + \sum_{k=1}^{i-1}((T)_{k,j+1}-(T)_{k,j}).
		\]
\end{enumerate}
\end{definition}

For two tableaux in $T_{n-1}$ and one $u\in\mathbb{Z}_{\geq 1}$, define a ``gluing'' operation $G(S,T,u)$, resulting in a new tableaux with shape $(n, n-1, \cdots, 1)$, in the following way:
\begin{enumerate}
	\item Take the first column of $S$ and glue it to the left side of $T$ to generate a new young diagram with shape $(n+1, n, \cdots, 2)$.
	\item Glue $u$ to the bottom of the first column of this new diagram to create a diagram with shape $(n+1, n, \cdots, 1)$. 
\end{enumerate}

\begin{example} Suppose $S$ and $T$ are both shape $(3,2,1)$:
\[
	G\left (\young(abc,de,f), \young(ghi,jk,l), \young(m)\right ) =  \young(aghi,djk,fl,m).
\]
\end{example}

\begin{definition}
Suppose that $(S,T,u)\in T_{n-1}\times T_{n-1}\times \mathbb{Z}_{\geq 1}$ such that
	\[
		1 \leq u \leq p + \sum_{k=1}^{n-1} ((T)_{k,1}-(S)_{k,1}).
	\]
We call such a triple $n$-admissible. 
\end{definition}

\begin{lemma}\label{lemma:num_admiss}
For any $n\geq 1$, the number of $n$-admissible triples is $|T_{n-1}|^2*p$.
\end{lemma}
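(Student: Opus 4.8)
The plan is to count the $n$-admissible triples pair-by-pair over $T_{n-1}\times T_{n-1}$ and then cancel a symmetric contribution. For a tableau $X\in T_{n-1}$, write $f(X)=\sum_{k=1}^{n-1}(X)_{k,1}$ for the sum of the entries of its first column, and for a pair $(S,T)\in T_{n-1}\times T_{n-1}$ set
\[
	B(S,T)\;=\;p+\sum_{k=1}^{n-1}\bigl((T)_{k,1}-(S)_{k,1}\bigr)\;=\;p+f(T)-f(S),
\]
so that the values of $u$ making $(S,T,u)$ an $n$-admissible triple are exactly the integers in $[1,B(S,T)]$. Granting for the moment that $B(S,T)\ge 0$ for every pair, the number of $n$-admissible triples is therefore
\[
	\sum_{(S,T)\in T_{n-1}\times T_{n-1}}B(S,T)
	\;=\;p\,|T_{n-1}|^2+\sum_{(S,T)}f(T)-\sum_{(S,T)}f(S).
\]
Here $\sum_{(S,T)}f(T)=|T_{n-1}|\sum_{T\in T_{n-1}}f(T)$ and, identically, $\sum_{(S,T)}f(S)=|T_{n-1}|\sum_{S\in T_{n-1}}f(S)$; since these are the same sum with the dummy tableau renamed, the last two terms cancel and the count collapses to $p\,|T_{n-1}|^2$, which is the assertion.

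The step that genuinely needs an argument is the inequality $B(S,T)\ge 0$, equivalently $f(S)\le p+f(T)$ for all $S,T\in T_{n-1}$: this is precisely what makes $B(S,T)$ (and not $\max\{0,B(S,T)\}$) the correct number of admissible $u$'s, and hence what makes the pairwise cancellation above legitimate. I would prove it by showing that $f(X)$ varies by at most $p$ as $X$ ranges over $T_{n-1}$. The lower end is easy: the all-ones tableau lies in $T_{n-1}$, so $\min_{X}f(X)=n-1$. For the upper bound one sums the defining inequalities of condition~(2) down the first column of $X$, namely $(X)_{i,1}\le p+\sum_{k=1}^{i-1}\bigl((X)_{k,2}-(X)_{k,1}\bigr)$ for $i=1,\dots,n-1$, arranging that the interior second-column contributions telescope and leave a clean upper bound on $f(X)$. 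Making this telescoping produce exactly the constant $p$ is the main obstacle; once it is in place, the counting identity above completes the proof.
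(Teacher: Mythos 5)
Your counting step is the paper's own argument: for each pair $(S,T)$ you count the admissible $u$'s as $B(S,T)=p+f(T)-f(S)$, sum over all pairs, and let the $f(T)$- and $f(S)$-sums cancel by symmetry. What you add is the observation that this identification is only valid when $B(S,T)\ge 0$, since a pair with $B(S,T)<0$ contributes $0$ admissible triples rather than a negative number; that is a real gap, and the paper's proof skips over it silently. The trouble is that your proposed repair cannot succeed: the claim that $f$ varies by at most $p$ over $T_{n-1}$ is false as soon as $n-1\ge 2$ and $p\ge 3$. Concretely, take $p=3$ and shape $(2,1)$. The tableau with every entry equal to $3$ lies in $T_2$ (each instance of condition (2) reads $3\le 3+0$) and has first-column sum $f=6$, while the all-ones tableau has $f=2$; taking $S$ to be the former and $T$ the latter gives $B(S,T)=3+(1-3)+(1-3)=-1$. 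More generally the all-$p$ tableau shows $\max f\ge (n-1)p$ while $\min f=n-1$, a spread of $(n-1)(p-1)$, which exceeds $p$ once $n\ge 3$ and $p\ge 3$.

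Consequently the inequality you correctly isolate as the crux is not merely left unproved in your write-up --- it fails, and with it the equality between the number of $n$-admissible triples and $\sum_{S,T}B(S,T)$. In the example above ($p=3$, $n=3$) there are $9$ pairs with $f(S)=6$ and $f(T)=2$, so the true count is $\sum_{S,T}\max\{0,B(S,T)\}=3\cdot 27^2+9$, strictly larger than the claimed $p\,|T_{n-1}|^2$. So the statement, read literally, appears to be false for $n\ge 3$, and no proof along these lines (yours or the paper's) can be completed without first amending the definition of admissibility or restating the lemma as a signed count. Your diagnosis of where the difficulty sits is exactly right; the proposal does not, and cannot as outlined, resolve it.
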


\begin{proof}
Summing over each possibility of $u$ for each pair of $S,T$ yields:
\begin{align}\label{line_tnp}
		\sum_{S,T\in T_{n-1}} p + \sum_{k=1}^{n-1} ((T)_{k,1}-(S)_{k,1}) = |T_{n-1}|^2*p +\sum_{S,T\in T_n} \sum_{k=1}^{n-1} ((T)_{k,1}-(S)_{k,1}).
\end{align}
But then $\sum_{S,T\in T_{n-1}} \sum_{k=1}^{n-1} ((T)_{k,1}-(S)_{k,1})=0$ by symmetry and so Equation~(\ref{line_tnp}) is simply $|T_{n-1}|^2*p$.
\end{proof}

Define two maps: $F: T_n \mapsto T_{n-1}$ and $L: T_n\mapsto T_{n-1}$ given by truncating the first block on each row and the last block on each row, respectively. 
\begin{example}
\begin{align*}
	F\left ( \young(abc,de,f)\right )= \young(bc,e)\ \ \ \ \ \ 
	L\left ( \young(abc,de,f)\right )= \young(ab,d)
\end{align*}
\end{example}

Clearly if $(S, T, u)$ is an $n$-admissible triple with $F(S) = L(T)$, then $G(S, T, u)\in T_n$.
\begin{prop}\label{prop:admiss_bij}
The map $G(S, T,u)$ is a bijection between $n$-admissible triples $(S, T, u)$ with $F(S) = L(T)$ and tableaux in $T_n$.
\end{prop}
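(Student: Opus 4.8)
The plan is to write down the inverse map explicitly. Given $P\in T_n$, which has shape $(n,n-1,\dots,1)$, set $u := (P)_{n,1}$ (the single entry in the bottom row), let $T := F(P)$ (so $T$ has shape $(n-1,\dots,1)$, with the now-empty bottom row discarded), and let $S$ be the unique tableau of shape $(n-1,\dots,1)$ with first column $(P)_{1,1},\dots,(P)_{n-1,1}$ and with $F(S)=L(T)$; explicitly $(S)_{k,1}=(P)_{k,1}$ and $(S)_{k,j}=(T)_{k,j-1}=(P)_{k,j}$ for $j\geq 2$. Granting for a moment that this triple is $n$-admissible, the verification that $P\mapsto(S,T,u)$ is a two-sided inverse of $G$ is immediate: $G(S,T,u)=P$ by construction, and conversely from any admissible triple one reads off $u$ as the bottom entry of $G(S,T,u)$, recovers $T=F(G(S,T,u))$, and recovers the first column of $S$ from the first $n-1$ entries of the first column of $G(S,T,u)$, after which the constraint $F(S)=L(T)$ determines $S$ completely. (That $G(S,T,u)\in T_n$ for an admissible triple with $F(S)=L(T)$ is the remark preceding the statement.)

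So the substance of the proof is checking that the triple $(S,T,u)$ built from $P\in T_n$ is $n$-admissible, i.e.\ that $S,T\in T_{n-1}$ and $1\leq u\leq p+\sum_{k=1}^{n-1}((T)_{k,1}-(S)_{k,1})$, and the mechanism is that each defining inequality of $T_n$ at a block of $P$ turns into exactly one of the conditions we need. For a block $(i,j)$ of $P$ with $j\geq2$ we have $(P)_{i,j}=(T)_{i,j-1}$, and substituting this into the $T_n$-inequality at $(i,j)$ produces precisely the $T_{n-1}$-defining inequality for $T$ at $(i,j-1)$; since the blocks $(i,j)$ of $P$ with $j\geq2$ are in bijection (via $(i,j)\mapsto(i,j-1)$) with all blocks of $T$, this yields $T\in T_{n-1}$. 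For a block $(i,1)$ of $P$ with $i\leq n-1$, using $(S)_{k,1}=(P)_{k,1}$ and $(S)_{k,2}=(T)_{k,1}=(P)_{k,2}$ turns the $T_n$-inequality at $(i,1)$ into the $T_{n-1}$-defining inequality for $S$ at $(i,1)$; combined with $F(S)=L(T)$ together with $T\in T_{n-1}$, which already supply the defining inequalities for $S$ in columns $\geq2$, this gives $S\in T_{n-1}$. The one remaining block of $P$ is $(n,1)$, which holds $u$, and the $T_n$-inequality there is literally $1\leq u\leq p+\sum_{k=1}^{n-1}((T)_{k,1}-(S)_{k,1})$. As a consistency check, this accounting matches the $\tfrac{n(n+1)}{2}$ blocks of $P$ against the $n-1$ free entries of the first column of $S$, the $\tfrac{(n-1)n}{2}$ entries of $T$, and the single value $u$.

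The main obstacle is nothing deeper than keeping the indices straight: one must confirm that the offset sum $\sum_{k=1}^{i-1}((T)_{k,j+1}-(T)_{k,j})$ transforms correctly under deletion of the first or last column and under the gluing operation, and that every block referenced in these substitutions actually occurs in the relevant shape. No new idea is needed beyond this dictionary between the blocks of $P$ and the data $(S,T,u)$; once it is recorded, both inclusions and the bijectivity follow at once.
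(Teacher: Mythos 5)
Your proof is correct and takes essentially the same route as the paper: the inverse map is $P\mapsto(L(P),F(P),(P)_{n,1})$, and bijectivity follows by reading off $u$, $T$, and the first column of $S$ from the glued tableau. You additionally verify in detail that this triple is $n$-admissible with $F(S)=L(T)$, a well-definedness point the paper's proof leaves implicit.
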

\begin{proof}
If $P\in T_n$, then $G(L(P), F(P), (P)_{n,1}) = P$, so $G$ is surjective.

Suppose that $(S,T,u)$ and $(S', T', u')$ are $n$-admissible triples with $F(S) = L(T)$ and $F(S')=L(T')$ such that $G(S,T,u)=G(S',T',u')$. Immediately we see that $u=u'$ and $T=T'$ by the construction of $G$. For the same reason, the first columns of $S$ and $S'$ also coincide. But then $F(S')=L(T')=L(T)=F(S)$, and so the rest of the tableaux of $S$ and $S'$ coincide. Thus $G$ is injective.
\end{proof}

\begin{corollary}\label{Tn_recursive}
For any $n\geq 3$:
	\[
		|T_n| = \frac{|T_{n-1}|^2}{|T_{n-2}|}\cdot p.
	\]
\end{corollary}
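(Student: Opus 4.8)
The plan is to combine the two counting results just established. By Lemma~\ref{lemma:num_admiss}, the total number of $n$-admissible triples is $|T_{n-1}|^2\cdot p$. By Proposition~\ref{prop:admiss_bij}, the $n$-admissible triples $(S,T,u)$ that additionally satisfy the compatibility condition $F(S)=L(T)$ are in bijection with $T_n$, so there are exactly $|T_n|$ of them. Thus it suffices to show that the proportion of $n$-admissible triples meeting the constraint $F(S)=L(T)$ is $1/|T_{n-2}|$; equivalently, $|T_n|\cdot|T_{n-2}| = |T_{n-1}|^2\cdot p$.

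The key step is therefore a fibering argument over the common value $F(S)=L(T)$. First I would observe that both $F$ and $L$ map $T_{n-1}$ onto $T_{n-2}$, and I would argue that these maps are \emph{uniform}: each fiber $F^{-1}(R)$ has the same cardinality, and likewise each fiber $L^{-1}(R)$, as $R$ ranges over $T_{n-2}$. The cleanest way to see this is that the defining inequality in condition~(2) for a block $(i,j)$ of a tableau in $T_{n-1}$ depends only on the entries in columns $j$ and $j+1$ in rows above it; removing the first column (applying $F$) or the last column (applying $L$) leaves a tableau whose admissibility inequalities are precisely those of $T_{n-2}$, and the number of ways to prepend a valid first column, respectively append a valid last column, to a fixed $R\in T_{n-2}$ does not depend on $R$. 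Indeed, the bijection furnished by $G$ and Proposition~\ref{prop:admiss_bij} already encodes this: the construction of $G(S,T,u)$ shows that once $F(S)=L(T)=R$ is fixed, $S$ is determined by $R$ together with its first column and $T$ is determined by $R$ together with $u$ and the choices internal to $L^{-1}(R)$. Writing $a$ for the common fiber size of $F$ and $b$ for that of $L$, we get $|T_{n-1}| = a\,|T_{n-2}| = b\,|T_{n-2}|$, so $a=b$, and the number of pairs $(S,T)$ with $F(S)=L(T)$ is $\sum_{R\in T_{n-2}} |F^{-1}(R)|\cdot|L^{-1}(R)| = |T_{n-2}|\cdot a^2 = |T_{n-1}|^2/|T_{n-2}|$.

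Finally, I would assemble the pieces: among all $n$-admissible triples, the choice of $u$ is unconstrained by $F(S)=L(T)$ (the range of $u$ in the admissibility condition refers only to first columns of $S$ and $T$, which are unaffected), so the count of admissible triples satisfying $F(S)=L(T)$ equals (number of compatible pairs $(S,T)$) times (average number of valid $u$), and by Lemma~\ref{lemma:num_admiss} the average number of valid $u$ over \emph{all} pairs is exactly $p$; the symmetry argument in that lemma's proof applies verbatim to the sub-collection of compatible pairs since it too is symmetric under swapping $S\leftrightarrow T$ (note $F(S)=L(T)$ is symmetric only if we also swap the roles of $F$ and $L$, so here I would instead argue directly that $\sum_{(S,T):F(S)=L(T)}\sum_k((T)_{k,1}-(S)_{k,1})$ telescopes to $0$ because fixing $R=F(S)=L(T)$ makes the first column of $S$ and the last column of $T$ range over index-independent sets). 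Combining, $|T_n| = \bigl(|T_{n-1}|^2/|T_{n-2}|\bigr)\cdot p$, which is the claim; the hypothesis $n\geq 3$ is exactly what is needed for $T_{n-2}$ to be defined and nonempty so that the division makes sense.

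I expect the main obstacle to be making the uniformity of the fibers of $F$ and $L$ fully rigorous — in particular, checking carefully that appending or prepending a column interacts correctly with the telescoping upper bound in condition~(2), and confirming that the $u$-count genuinely decouples from the compatibility constraint rather than just doing so "on average." Everything else is bookkeeping on top of Lemma~\ref{lemma:num_admiss} and Proposition~\ref{prop:admiss_bij}.
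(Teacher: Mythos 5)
Your overall strategy --- combine Lemma~\ref{lemma:num_admiss} with Proposition~\ref{prop:admiss_bij} and show that the triples satisfying $F(S)=L(T)$ make up a $1/|T_{n-2}|$ fraction of all $n$-admissible triples --- matches the paper's, but both intermediate claims you use to establish that fraction are false. First, the fibers of $F$ and $L$ are \emph{not} uniform. Take $n=3$, so $F\colon T_2\to T_1$. A tableau in $T_2$ has first row $(a,b)$ and second row $(c)$ with $1\le a,b\le p$ and $1\le c\le p+b-a$, and $F$ remembers only $b$, so
\[
|F^{-1}(b)|=\sum_{a=1}^{p}(p+b-a)=p^2+p\Bigl(b-\tfrac{p+1}{2}\Bigr),
\]
which depends on $b$: it equals $\tfrac{p(p+1)}{2}$ at $b=1$ and $\tfrac{p(3p-1)}{2}$ at $b=p$, neither of which is $|T_2|/|T_1|=p^2$. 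The point is that condition~(2) couples the first column to the second, so deleting or prepending a column does not decouple as you hope. Consequently your pair count fails: for $n=3$ one gets $\sum_{R\in T_1}|F^{-1}(R)|\cdot|L^{-1}(R)|=p^5-\tfrac{p^3(p^2-1)}{12}$, which for $p=3$ is $225$, not $|T_2|^2/|T_1|=243$.

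Second, the claim that the $u$-count decouples, i.e.\ that $\sum_{(S,T):\,F(S)=L(T)}\sum_{k}\bigl((T)_{k,1}-(S)_{k,1}\bigr)=0$, is also false: for $p=3$, $n=3$ this sum equals $54$ (indeed $|T_3|=3^6=729=3\cdot 225+54$). The swap $S\leftrightarrow T$ does not preserve the constraint $F(S)=L(T)$, so the symmetry argument of Lemma~\ref{lemma:num_admiss} does not transfer, and once $R$ is fixed the admissible first columns of $S$ and the admissible configurations of $T$ are genuinely different weighted sets, so there is no telescoping. Your two errors cancel in the end (they must, since the corollary is true), but your argument supplies no mechanism for that cancellation; to repair it you would have to evaluate the weighted sum $\sum_{F(S)=L(T)}\bigl(p+\sum_k((T)_{k,1}-(S)_{k,1})\bigr)$ directly rather than factoring it as (number of compatible pairs) times (average weight). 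I note that the paper's own justification of the $1/|T_{n-2}|$ proportion is a one-line equidistribution assertion of the same flavor, so the gap you were trying to fill is real --- but the fill proposed here does not hold up.
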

\begin{proof}
By the bijection in Proposition~\ref{prop:admiss_bij}., each $P\in T_n$ corresponds to an $n$-admissible triple $(S, T, u)$ with $F(S)=L(T)$.  By Lemma~\ref{prop:admiss_bij}, the total number of $n$-admissible triples is $|T_{n-1}|^2*p$, and exactly $1/|T_{n-2}|$ many of these have the condition that $F(S)=L(T)$ since $L(T), F(S)\in T_{n-2}$.
\end{proof}

\begin{corollary}\label{corr:tn}
	For any $n\geq 1$, $|T_n|=p^{\sum_{k=1}^n} k$.
\end{corollary}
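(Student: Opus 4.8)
The plan is a short induction on $n$ driven by the recursion of Corollary~\ref{Tn_recursive}, once enough small cases are pinned down by hand. Throughout, write $s(m) = \sum_{k=1}^m k = m(m+1)/2$, so the assertion to be proved is $|T_n| = p^{s(n)}$.

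First I would dispose of the base cases $n=1$ and $n=2$ directly from the definition of $T_n$, since the recursion of Corollary~\ref{Tn_recursive} only takes effect for $n \geq 3$. For $n=1$ the shape is $(1)$ and the only constraint is $1 \leq (T)_{1,1} \leq p$, so $|T_1| = p = p^{s(1)}$. For $n=2$ the shape is $(2,1)$: the two first-row entries range independently over $\{1, \dots, p\}$, and for each such choice the bottom entry ranges over $\{1, \dots,\, p + (T)_{1,2} - (T)_{1,1}\}$, a range that is always nonempty because $(T)_{1,2} \geq 1$ and $(T)_{1,1} \leq p$ force $p + (T)_{1,2} - (T)_{1,1} \geq 1$. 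Hence
\[
	|T_2| = \sum_{a=1}^p \sum_{b=1}^p \bigl(p + b - a\bigr),
\]
in which the $b$-sum and the $a$-sum cancel by symmetry, leaving $|T_2| = p^3 = p^{s(2)}$.

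For the inductive step, fix $n \geq 3$ and suppose $|T_{n-1}| = p^{s(n-1)}$ and $|T_{n-2}| = p^{s(n-2)}$. Corollary~\ref{Tn_recursive} then gives
\[
	|T_n| = \frac{|T_{n-1}|^2}{|T_{n-2}|}\cdot p = p^{\,2s(n-1) - s(n-2) + 1},
\]
so it remains only to verify the exponent identity $2s(n-1) - s(n-2) + 1 = s(n)$. Rewriting it as $\bigl(s(n) - s(n-1)\bigr) - \bigl(s(n-1) - s(n-2)\bigr) = 1$, i.e. $n - (n-1) = 1$, makes it immediate, and this closes the induction.

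There is no substantive obstacle here; the only point that needs attention is that the recursion begins at $n=3$, so $|T_1|$ and $|T_2|$ must be obtained independently to seed the argument. Within that, the one genuine computation is the evaluation of $|T_2|$, where one should check (as above) that the upper bound in condition~(2) never drops below $1$, so that no truncation of the count occurs.
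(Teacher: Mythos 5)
Your proof is correct and follows essentially the same route as the paper: the base cases $|T_1|=p$ and $|T_2|=p^3$ are computed directly (with the same symmetry cancellation for $n=2$), and the inductive step invokes the recursion of Corollary~\ref{Tn_recursive} and checks the exponent identity. The only addition is your explicit verification that the upper bound in condition~(2) never drops below $1$ for $n=2$, a minor point the paper leaves implicit.
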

\begin{proof}
If $n=1$, then clearly $|T_1|=p$. For $n=2$, suppose the following is a tableaux in $T_2$:
	\[
	\young(ab,c)
\]
The only condition on $a$ and $b$ is that $1\leq a,b\leq p$, and for each $(a,b)\in [1,p]^2$, there are $p+(b-a)$ choices for $c$. Thus in total there are:
\begin{align*}
	\sum_{(a,b)\in [1,p]^2} (p + (b-a)) &= \sum_{(a,b)\in [1,p]^2} p + \sum_{(a,b)\in [1,p]^2}(b-a)
\end{align*}
 tableaux in $T_2$. But then $\sum_{(a,b)\in [1,p]^2} p =p^3$ and by symmetry $\sum_{(a,b)\in [1,p]^2}(b-a)=0$. Hence $|T_1|=p$ and $|T_2|=p^3$. 
For general $n$ we proceed by induction:
	\[
		|T_n| = \frac{|T_{n-1}|^2}{|T_{n-2}|}\cdot p=p^{2\sum_{k=1}^{n-1}k - \sum_{k=1}^{n-2}k + 1} = p^{n + \sum_{k=1}^{n-1}k} = p^{\sum_{k=1}^n k}.
	\]
\end{proof}

\section{The Artin-Hasse Coefficients}

\begin{definition}
	Let $I_n$ be the set of all cycle types having $p$-power order in $S_n$. We do not include fixed points (represented by 1's) in our cycle types. If $\sigma\in I_n$, denote by $|\sigma|$ the length of the cycle type, again not counting fixed points.
Let $I^k$ be the set of all cycle types $t$ with $p$-power order and cycle length $pk$. For each cycle type $t$, let $C_t^n$ be the number of $p$-cycles with cycle type $t$ in $S_n$. 
\end{definition}

\begin{example}
Suppose $p=3$. Then in $S_{10}$:
\[
	I_{10} = \{(3), (3,3), (3,3,3), (9)\},
\] 
and if $t=(3,3,3)$, then $|t|=3+3+3=9$. Further,
\begin{align*}
	I^1 &= \{(3)\}, \\
	I^2 &= \{(3,3)\}, \\
	I^3 &= \{(3,3,3), (9)\}.
\end{align*}
\end{example}

By definition of $h_n$:
\begin{align*}
	h_n = 1+ \sum_{\substack{t\in I_n\\ t\neq 1}} C_t^n.
\end{align*}

\begin{definition}
Let $n\in \mathbb{Z}_{\geq 2}$. Define a map:
	\begin{align*}
		r_n:S_{n-1}\times \{1,\cdots, n-1\}\mapsto S_n,
	\end{align*}
where $r_n(\sigma, j)$ is given by replacing the element $j$ in $\sigma$ by $n$.
\end{definition}

\begin{lemma}\label{lemma:C_tn_recurs}
If $t\neq 1$ and $n > |t|$, then:
\[
	C_t^n = \frac{n}{n-|t|}C_t^{n-1}.
\]
\end{lemma}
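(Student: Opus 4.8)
The plan is a double-counting argument built on the map $r_n$. Fix $t\neq 1$ and $n>|t|$, and restrict attention to the set $P=\{(\sigma,j): \sigma\in S_{n-1}\text{ has cycle type }t,\ 1\le j\le n-1\}$, which has cardinality $(n-1)C_t^{n-1}$. The first step is to observe that $r_n$ maps $P$ into $B:=\{\tau\in S_n:\tau\text{ has cycle type }t\}$: replacing the entry $j$ by $n$ in the cycle notation of $\sigma$ (so that $j$ becomes a fixed point) only relabels the non-trivial cycles and therefore preserves the multiset of non-trivial cycle lengths. Hence $(n-1)C_t^{n-1}=\sum_{\tau\in B}\#\bigl(r_n^{-1}(\tau)\cap P\bigr)$, and the whole proof reduces to computing these fibre sizes.

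The second step is the fibre count. If $r_n(\sigma,j)=\tau$ then $\tau$ must fix $j$, since $j$ does not occur in $r_n(\sigma,j)$; conversely, for any fixed point $j\le n-1$ of $\tau$ one recovers $\sigma$ uniquely by relabelling $n$ back to $j$ and restricting to $\{1,\dots,n-1\}$, and this $\sigma$ automatically has cycle type $t$. So $\#\bigl(r_n^{-1}(\tau)\cap P\bigr)$ is exactly the number of fixed points of $\tau$ lying in $\{1,\dots,n-1\}$. A cycle-type-$t$ permutation of $\{1,\dots,n\}$ has $n-|t|$ fixed points in all, so this number equals $n-|t|$ unless $\tau(n)=n$, in which case it is $n-|t|-1$.

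The third step is to assemble the sum. The permutations in $B$ fixing $n$ are precisely the cycle-type-$t$ permutations of $\{1,\dots,n-1\}$, so there are $C_t^{n-1}$ of them. Therefore
\[
  (n-1)C_t^{n-1}=\sum_{\tau\in B}\Bigl((n-|t|)-\mathbf{1}[\tau(n)=n]\Bigr)=(n-|t|)\,C_t^{n}-C_t^{n-1},
\]
and rearranging yields $n\,C_t^{n-1}=(n-|t|)\,C_t^{n}$, i.e. $C_t^{n}=\tfrac{n}{n-|t|}C_t^{n-1}$; the hypothesis $n>|t|$ is what makes $C_t^{n-1}$ meaningful and guarantees fixed points exist.

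I do not expect a serious obstacle here; the only delicate point — and the crux of the bookkeeping — is the boundary effect that $j$ ranges over $\{1,\dots,n-1\}$ and so fails to hit the fixed point $n$ of $\tau$ when $\tau$ fixes $n$. This is exactly the origin of the correction term $C_t^{n-1}$, and it is precisely what upgrades the naive ratio $\tfrac{n-1}{n-|t|}$ to the correct $\tfrac{n}{n-|t|}$. (One could alternatively just divide the closed formula $C_t^{n}=\tfrac{n!}{(n-|t|)!\,\prod_i i^{a_i}a_i!}$ by its analogue for $n-1$, but the double-counting argument is the one naturally suggested by the definition of $r_n$, and it is the one I would present.)
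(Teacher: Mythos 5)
Your proof is correct and is essentially the paper's argument: both double-count the pairs $(\sigma,j)$ under the map $r_n$ and identify $n-|t|$ as the relevant fibre size. The only cosmetic difference is that the paper pads its multiset with an extra copy of each $\sigma$ (so every element of $C_t^n$ gets uniform multiplicity $n-|t|$ and no correction term is needed), whereas you keep the raw count $(n-1)C_t^{n-1}$ and absorb the permutations fixing $n$ via the term $-\mathbf{1}[\tau(n)=n]$; the bookkeeping in both versions is equivalent.
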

\begin{proof}
Consider the multiset $K$:
\[
	K = \bigcup_{\sigma\in C_t^{n-1}}\{ r_n(\sigma, 1), r_n(\sigma,2), \cdots, r_n(\sigma, n-1), \sigma\}.
\]
Clearly $C_t^n\subset K$, so we will count and see with what multiplicity each element of $C_t^n$ appears in $K$.

If $j$ does not appear in some permutation $\sigma\in C_t^{n-1}$, then $r_n(\sigma, j)=\sigma$. So for each $\sigma\in C_{t}^{n-1}$, because there are $(n-1)-|t|$ numbers not appearing in $\sigma$, $\sigma$ itself appears in the set $K$ with multiplicity $(n-1)-|t|+1=n-|t|$. 
On the other hand, suppose $\sigma\in C_t^n-C_t^{n-1}$ so $\sigma$ has an $n$ in its cycle somewhere. By replacing $n$ with all the numbers not appearing in $\sigma$, we can see there are $n-|t|$ many $\tau\in C_t^{n-1}$ such that $r_n(\tau, j)=\sigma$, some $j$. 

Therefore since every element in $K$ has multiplicity $n-|t|$ and $K$ has cardinality $nC_t^{n-1}$, the claim follows.
\end{proof}

\begin{corollary}\label{cor:C_tn_binom}
If $t\neq 1$, then:
\begin{align*}
	C_t^n = \binom{n}{|t|}C_t^{|t|}.
\end{align*}
\end{corollary}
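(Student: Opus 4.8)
The plan is to unwind the recursion of Lemma~\ref{lemma:C_tn_recurs} from $n$ down to $|t|$. Write $m = |t|$ for brevity. First I would dispose of the degenerate ranges: if $n < m$ then $S_n$ contains no element of cycle type $t$, so $C_t^n = 0 = \binom{n}{m}$, and if $n = m$ then both sides equal $C_t^{m}$. So the only real content is the case $n > m$.

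For $n > m$, Lemma~\ref{lemma:C_tn_recurs} applies at each index $k$ with $m < k \le n$ (the hypothesis $k > |t|$ holds throughout this range since $k \ge m+1$), giving $C_t^{k} = \frac{k}{k - m}\,C_t^{k-1}$. Composing these equalities for $k = n, n-1, \dots, m+1$ — which one can formalize, if desired, by a one-line induction on $n$ with base case $n = m+1$ — yields the telescoping product
\[
	C_t^n = \left(\prod_{k = m+1}^{n} \frac{k}{k-m}\right) C_t^{m}.
\]

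It then remains to recognize this product as a binomial coefficient: the numerator is $\prod_{k=m+1}^n k = n!/m!$ and the denominator is $\prod_{k=m+1}^n (k-m) = (n-m)!$, so the product equals $\frac{n!}{m!\,(n-m)!} = \binom{n}{m}$, and hence $C_t^n = \binom{n}{|t|} C_t^{|t|}$. I do not expect any genuine obstacle here; the only point requiring a moment's care is verifying that the recursion is legitimately invoked at every intermediate value of $k$, which is immediate.
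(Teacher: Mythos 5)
Your proof is correct and is essentially the paper's argument: the paper likewise iterates Lemma~\ref{lemma:C_tn_recurs} down to $|t|$ and identifies the resulting telescoping product $\frac{n}{n-|t|}\cdot\frac{n-1}{n-1-|t|}\cdots\frac{|t|+1}{1}$ with $\binom{n}{|t|}$. Your explicit handling of the cases $n<|t|$ and $n=|t|$ is a minor tidiness the paper omits, not a difference in method.
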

\begin{proof}
Inductively applying Lemma~\ref{lemma:C_tn_recurs} yields:
\begin{align*}
	C_t^n &= \frac{n}{n-|t|}C_t^{n-1} =  \frac{n}{n-|t|}\cdot  \frac{n-1}{n-1-|t|} C_t^{n-2}\\
		&=\frac{n}{n-|t|}\cdot  \frac{n-1}{n-1-|t|}\cdot \frac{n-2}{n-2-|t|}\cdots \frac{|t|+1}{1}C_t^{|t|}.
\end{align*}
\end{proof}

\begin{lemma}\label{lemma:h_n_expans}
For $n\geq 1$,
\begin{align*}
	h_n = \sum_{k=0}^{\floor{n/p}} \binom{n}{kp} C_k,
\end{align*}	
where $C_k = \sum_{t\in I^k}C_t^{kp}$.
\end{lemma}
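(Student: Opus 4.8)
The plan is to start from the combinatorial identity $h_n = 1 + \sum_{t \in I_n,\, t \neq 1} C_t^n$ and reorganize the sum over cycle types according to the total number $|t|$ of non-fixed points, which is always a multiple of $p$ for $p$-power order elements. Writing $|t| = kp$, I would split the sum as $h_n = 1 + \sum_{k=1}^{\lfloor n/p \rfloor} \sum_{t \in I^k, |t| = kp} C_t^n$. The key input is Corollary~\ref{cor:C_tn_binom}, which tells us $C_t^n = \binom{n}{|t|} C_t^{|t|} = \binom{n}{kp} C_t^{kp}$ whenever $t \neq 1$. Since $\binom{n}{kp}$ depends only on $k$ and not on the particular cycle type $t$ with $|t| = kp$, it factors out of the inner sum, giving $\sum_{t \in I^k} C_t^n = \binom{n}{kp} \sum_{t \in I^k} C_t^{kp} = \binom{n}{kp} C_k$.

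Next I would absorb the leading $1$ into the $k=0$ term: the $k=0$ term should be $\binom{n}{0} C_0 = C_0$, so I need $C_0 = 1$. This is consistent with the convention that $I^0$ consists of the empty cycle type (the identity, with no non-fixed points), which contributes a single permutation, so $C_0 = \sum_{t \in I^0} C_t^0 = 1$. Putting this together yields $h_n = \sum_{k=0}^{\lfloor n/p \rfloor} \binom{n}{kp} C_k$, as claimed.

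One subtlety to handle carefully is the definition of $I^k$ and the indexing: in the body text $I^k$ is described as the set of cycle types of $p$-power order "and cycle length $pk$" — I read "cycle length" here as the total number of points moved, i.e., $|t| = pk$, so that $I^k = \{t \in I_{pk} : |t| = pk\}$ and every such $t$ genuinely lives in $S_{pk}$, making $C_t^{pk}$ the natural base case for Corollary~\ref{cor:C_tn_binom}. I would make sure the range of $k$ is exactly $1 \le k \le \lfloor n/p \rfloor$ before the $k=0$ absorption, since $C_t^n = 0$ automatically when $|t| = kp > n$, so extending or truncating the range harmlessly matches $\lfloor n/p \rfloor$.

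The main obstacle is essentially bookkeeping rather than anything deep: one must verify that every non-identity $p$-power-order cycle type $t$ satisfies $|t| \equiv 0 \pmod p$ (immediate, since each non-trivial cycle in such a permutation has length a positive power of $p$, hence is divisible by $p$), and that the $k=0$ term is correctly normalized to reproduce the standalone $1$. Once those conventions are pinned down, the proof is a two-line reindexing built on Corollary~\ref{cor:C_tn_binom}.
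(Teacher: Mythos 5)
Your proposal is correct and follows essentially the same route as the paper: decompose $h_n = 1 + \sum_{t\neq 1} C_t^n$ by the value $|t| = kp$, apply Corollary~\ref{cor:C_tn_binom} to replace $C_t^n$ by $\binom{n}{kp}C_t^{kp}$, and factor the binomial out of the inner sum, with the standalone $1$ absorbed as the $k=0$ term. Your extra care about the convention $C_0=1$ and the range of $k$ is sound bookkeeping that the paper leaves implicit.
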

\begin{proof}
The lemma follows from Corollary~\ref{cor:C_tn_binom}, and because:
\begin{align*}	
	h_n = 1+\sum_{\substack{t\in I_n\\t\neq 1}}C_t^n = \sum_{k=0}^{\floor{n/p}}\sum_{t\in I^k} C_t^n =  \sum_{k=0}^{\floor{n/p}}\sum_{t\in I^k}\binom{n}{kp}C_t^{kp} =  \sum_{k=0}^{\floor{n/p}}\binom{n}{kp}\cdot (\sum_{t\in I^k}C_t^{kp}).
\end{align*}
\end{proof}

\section{The Determinant}\label{section:ah}

In this section we will piece together the results of the previous two sections and prove our main theorem. 
We start with a technical lemma which is key to proving Proposition~\ref{prop:CD_det}.

\begin{lemma}\label{lemma:deter}
For any formal variables $E_{j,k}$ and $X_{i,k}$, 
\[
	|  \sum_{k=1}^{\ell} E_{j, i-k+1} X_{i,k} |_{1\leq i,j\leq \ell} = |E_{j,k}|_{1\leq j,k\leq \ell} \cdot \prod_{i=1}^\ell X_{i,1}.
\]
\end{lemma}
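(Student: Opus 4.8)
The plan is to recognise the matrix on the left as a product of a lower triangular matrix with the transpose of $(E_{j,k})_{1\le j,k\le\ell}$, after which the identity is immediate from the multiplicativity of the determinant together with the triangular--determinant formula.

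First I would tidy up the index ranges. A summand $E_{j,i-k+1}X_{i,k}$ only contributes when $i-k+1\ge 1$, i.e.\ when $k\le i$ (entries $E_{j,m}$ with $m\le 0$ being taken as $0$, in keeping with the convention $u_n=0$ for $n<0$), so the $(i,j)$ entry of the left-hand matrix is really $\sum_{k=1}^{i}E_{j,i-k+1}X_{i,k}$. Substituting $m=i-k+1$ --- equivalently $k=i-m+1$, with $m$ running from $1$ to $i$ --- this entry becomes $\sum_{m=1}^{i}X_{i,i-m+1}\,E_{j,m}$. I then define the $\ell\times\ell$ matrix $A$ by $A_{i,m}=X_{i,i-m+1}$ for $1\le m\le i$ and $A_{i,m}=0$ for $m>i$; thus $A$ is lower triangular with diagonal entries $A_{i,i}=X_{i,1}$, and, writing $\widetilde E$ for the transpose of $E=(E_{j,k})_{1\le j,k\le\ell}$ (so $\widetilde E_{m,j}=E_{j,m}$), the previous display says precisely that the left-hand matrix equals $A\,\widetilde E$.

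Taking determinants now yields
\[
  |\sum_{k=1}^{\ell}E_{j,i-k+1}X_{i,k}|_{1\le i,j\le\ell}
  =\det(A)\,\det(\widetilde E)
  =\Bigl(\prod_{i=1}^{\ell}X_{i,1}\Bigr)\,|E_{j,k}|_{1\le j,k\le\ell},
\]
using that the determinant of a triangular matrix is the product of its diagonal entries and that a matrix and its transpose have the same determinant. I do not anticipate any genuine obstacle: the one thing to get right is the bookkeeping in the substitution $m=i-k+1$ and the verification that the resulting $A$ is honestly lower triangular, which one can confirm at a glance on the $2\times 2$ case. If one wishes to avoid the factorisation altogether, the same conclusion drops out of expanding the determinant multilinearly in its rows --- row $i$ of the left-hand matrix is a linear combination of the vectors $(E_{1,m},\dots,E_{\ell,m})$ with $m=1,\dots,i$ only, and this triangular dependence forces the single surviving tuple in the multilinear expansion to be the diagonal one, carrying the coefficient $\prod_{i=1}^{\ell}X_{i,1}$ and the matrix $(E_{j,m})_{1\le j,m\le\ell}$.
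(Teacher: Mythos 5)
Your proof is correct and follows essentially the same route as the paper: you factor the left-hand matrix as the product of the lower triangular matrix $(X_{i,i-m+1})_{i,m}$ (with diagonal $X_{i,1}$) and the transpose of $(E_{j,k})$, then apply multiplicativity of the determinant. The bookkeeping with the substitution $m=i-k+1$ and the vanishing convention for out-of-range indices matches the paper's argument exactly.
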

\begin{proof}
Consider the matrix $M$:
\begin{align*}
	M = |E_{j,i}|_{1\leq i,j\leq \ell}\cdot |X_{i, i-j+1}|_{1\leq i,j\leq\ell},
\end{align*}
taking $X_{i,k}=0$ if $k<1$. The entry $(M)_{i,j}$ is given by:
\[
	(M)_{i,j} = \sum_{v=1}^\ell X_{i, i-v+1}E_{j, v} = \sum_{v=1}^{i} X_{i, i-v+1}E_{j, v}.
\]
The claim then follows since $ |X_{i, i-j+1}|_{1\leq i,j\leq\ell}$ is lower triangular and equals\\ $\prod_{i=1}^\ell X_{i,1}$.
\end{proof}

\begin{prop}\label{prop:CD_det}
Let $i,j\geq 1$ and $\alpha_j,\beta_i\in\mathbb{Q}_{>0}$. For any $\ell\geq 1$,
\begin{align*}
	|\alpha_j\beta_iu_{pi-j}|_{1\leq i,j\leq \ell} =  |\frac{\alpha_j\beta_i}{(pk-j)!}|_{1\leq j,k\leq \ell}.
\end{align*}
\end{prop}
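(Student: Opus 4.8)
The plan is to expand each matrix entry $u_{pi-j}$ combinatorially and then recognize the resulting sum as the bilinear ``convolution'' appearing in Lemma~\ref{lemma:deter}. Starting from $u_n=h_n/n!$ and Lemma~\ref{lemma:h_n_expans}, one writes, for $pi-j\ge 1$,
\[
	u_{pi-j}=\frac{1}{(pi-j)!}\sum_{k\ge 0}\binom{pi-j}{pk}C_k=\sum_{k\ge 0}\frac{C_k}{(pk)!\,(p(i-k)-j)!},
\]
the second equality being the elementary fact that $\binom{pi-j}{pk}/(pi-j)!=1/\bigl((pk)!\,(p(i-k)-j)!\bigr)$, where throughout we use the convention $1/m!=0$ for $m<0$ (compatible with $u_n=0$ for $n<0$ and $u_0=1$; these boundary cases cause no trouble). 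Under this convention every term with $k\ge i$ vanishes, and since $i\le\ell$ we may truncate the sum at $k=\ell-1$ without dropping any nonzero term.

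I would then shift the index by $k=m-1$, obtaining
\[
	\alpha_j\beta_iu_{pi-j}=\sum_{m=1}^{\ell}\frac{\alpha_j}{(p(i-m+1)-j)!}\cdot\frac{\beta_iC_{m-1}}{(p(m-1))!}.
\]
Setting $E_{j,v}:=\alpha_j/(pv-j)!$ and $X_{i,m}:=\beta_iC_{m-1}/(p(m-1))!$, this reads exactly $\alpha_j\beta_iu_{pi-j}=\sum_{m=1}^{\ell}E_{j,i-m+1}X_{i,m}$; note $E_{j,v}=0$ for $v\le 0$, so the terms with $m>i$ drop out, as required to match the left-hand side of Lemma~\ref{lemma:deter}. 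Since Lemma~\ref{lemma:deter} is an identity of polynomials in the $E$'s and $X$'s, it survives this substitution of positive rationals and yields
\[
	|\alpha_j\beta_iu_{pi-j}|_{1\le i,j\le\ell}=|E_{j,k}|_{1\le j,k\le\ell}\cdot\prod_{i=1}^{\ell}X_{i,1}=\Bigl|\frac{\alpha_j}{(pk-j)!}\Bigr|_{1\le j,k\le\ell}\cdot\prod_{i=1}^{\ell}\frac{\beta_iC_0}{0!}.
\]
As $C_0=1$ (the identity contributes the term $\binom{n}{0}C_0=1$ in Lemma~\ref{lemma:h_n_expans}), the trailing product equals $\prod_{i=1}^{\ell}\beta_i$, and absorbing these factors back into the columns of the determinant gives $|\alpha_j\beta_k/(pk-j)!|_{1\le j,k\le\ell}$, the asserted right-hand side.

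The only real work here is bookkeeping: verifying the factorial identity that collapses the binomial; checking that the truncation at $k=\ell-1$ is harmless for every $i\le\ell$, including the regime $\ell>p$ in which some of the $u_{pi-j}$ and some of the binomials genuinely vanish; and arranging the shift $k\mapsto m-1$ so that both the summation range $1\le m\le\ell$ and the second index $i-m+1$ align precisely with the hypotheses of Lemma~\ref{lemma:deter}. Once $E$ and $X$ are chosen correctly and one notes $C_0=1$ (so the spurious factor $\prod_iX_{i,1}$ is just $\prod_i\beta_i$), the determinant identity is immediate; there is no deeper analytic or combinatorial obstacle beyond getting this alignment right.
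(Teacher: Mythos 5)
Your proposal is correct and follows essentially the same route as the paper: expand $u_{pi-j}$ via Lemma~\ref{lemma:h_n_expans}, recognize the entries as the convolution $\sum_m E_{j,i-m+1}X_{i,m}$, and apply Lemma~\ref{lemma:deter}. Your index shift $k=m-1$ is only a cosmetic improvement that makes the summation range line up literally with the lemma's statement (the paper instead starts its $X$-index at $0$), and your observation that $C_0=1$ turns $\prod_i X_{i,1}$ into $\prod_i\beta_i$ matches the paper's conclusion.
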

\begin{proof}
Let $i,j\geq 1$ with $pi-j\geq 0$. By Lemma~\ref{lemma:h_n_expans}:
\begin{align*}
	h_{pi-j} = \sum_{k=1}^{i-\floor{j/p}}\binom{ip-j}{kp} C_k.
\end{align*}
Hence:
\begin{align}\label{line:CjDih}
	\frac{\alpha_j\beta_i}{(pi-j)!}h_{pi-j} &= \sum_{k=0}^{i-\floor{j/p}}\left (\frac{\alpha_j}{(p(i-k)-j)!}\right )\left (\beta_i\frac{C_k}{(pk)!}\right).
\end{align}

Thus if $X_{i,k} = \beta_i\frac{C_k}{(pk)!}$ and
\begin{align*}
E_{j, k} = \begin{cases}
		\frac{\alpha_j}{(pk-j)!} & k\geq  \floor{j/p}\\
		0 & k< \floor{j/p}
		\end{cases},
\end{align*} 
then Equation~(\ref{line:CjDih}) becomes:
\begin{align}
\frac{\alpha_j\beta_i}{(pi-j)!}h_{pi-j} &= \sum_{k=0}^{\ell}E_{j,i-k}X_{i,k}.
\end{align}
So by Lemma~\ref{lemma:deter},
\begin{align*}
	|\alpha_j\beta_iu_{pi-j}|_{1\leq i,j\leq \ell} &=|\frac{\alpha_j\beta_i}{(pi-j)!}h_{pi-j}|_{1\leq i,j\leq \ell} =  |E_{j,k}|_{1\leq j,k\leq \ell} \cdot \prod_{i=1}^\ell X_{i,0} \\
	&=  |\frac{\alpha_j\beta_k}{(pk-j)!}|_{1\leq j,k\leq \ell}.
\end{align*}
\end{proof}

\begin{corollary}\label{corr:binom}
For any $\ell\geq 1$,
\[
	|\binom{pi}{j}h_{pi-j}|_{1\leq i,j\leq \ell} = |\binom{pi}{j}|_{1\leq i,j\leq \ell}.
\]
\end{corollary}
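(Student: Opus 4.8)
The plan is to deduce Corollary~\ref{corr:binom} directly from Proposition~\ref{prop:CD_det} by a judicious choice of the rational parameters $\alpha_j$ and $\beta_i$. Since $u_n = h_n/n!$, the left-hand side of the corollary, $|\binom{pi}{j}h_{pi-j}|_{1\leq i,j\leq\ell}$, should be matched against $|\alpha_j\beta_i u_{pi-j}|_{1\leq i,j\leq\ell} = |\alpha_j\beta_i h_{pi-j}/(pi-j)!|_{1\leq i,j\leq\ell}$. So I want to pick $\alpha_j,\beta_i\in\mathbb{Q}_{>0}$ so that
\[
	\frac{\alpha_j\beta_i}{(pi-j)!} = \binom{pi}{j} = \frac{(pi)!}{j!\,(pi-j)!}.
\]
Cancelling the common factor $(pi-j)!$, this is exactly $\alpha_j\beta_i = (pi)!/j!$, which splits as $\alpha_j = 1/j!$ and $\beta_i = (pi)!$. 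Both are positive rationals, so the hypotheses of Proposition~\ref{prop:CD_det} are met.

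With these choices, the left-hand side of Proposition~\ref{prop:CD_det} is precisely $|\binom{pi}{j}h_{pi-j}|_{1\leq i,j\leq\ell}$, and the right-hand side is $|\alpha_j\beta_k/(pk-j)!|_{1\leq j,k\leq\ell} = |(pk)!/(j!\,(pk-j)!)|_{1\leq j,k\leq\ell} = |\binom{pk}{j}|_{1\leq j,k\leq\ell}$. The remaining point is purely cosmetic: the matrix $|\binom{pk}{j}|_{1\leq j,k\leq\ell}$ appearing here is indexed with $j$ as the row index and $k$ as the column index, whereas the statement of the corollary writes $|\binom{pi}{j}|_{1\leq i,j\leq\ell}$ with $i$ as the row index. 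These are transposes of one another, so they have the same determinant, and renaming the dummy index $k$ to $i$ finishes the identification. I should also note that whenever $pi - j < 0$ the entry $h_{pi-j}$ is to be read as $0$ (consistent with the convention $u_n = 0$ for $n<0$ used in Theorem~\ref{thm:main}), which is harmless since it only occurs in the strict upper-left-to-lower range and the binomial convention $\binom{pi}{j}=0$ for $j>pi$ is never triggered for $1\leq i,j\leq\ell$ anyway when $p\geq 2$.

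There is essentially no obstacle here: the whole content is already packaged in Proposition~\ref{prop:CD_det}, and the corollary is the specialization $\alpha_j = 1/j!$, $\beta_i = (pi)!$ followed by a transpose. The only thing worth double-checking is that the index ranges in the floor-truncated sum in the proof of Proposition~\ref{prop:CD_det} genuinely cover all $k$ from $0$ to $\ell$ after padding with zeros — but that is handled there by the definition of $E_{j,k}$ vanishing for $k < \lfloor j/p\rfloor$, so nothing new needs to be verified. Thus the proof is a two-line deduction.

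\begin{proof}
Apply Proposition~\ref{prop:CD_det} with $\alpha_j = 1/j!$ and $\beta_i = (pi)!$, both of which lie in $\mathbb{Q}_{>0}$. Since $u_{pi-j} = h_{pi-j}/(pi-j)!$, the left-hand side becomes
\[
	\left|\,\frac{(pi)!}{j!\,(pi-j)!}\,h_{pi-j}\,\right|_{1\leq i,j\leq\ell} = \left|\binom{pi}{j}h_{pi-j}\right|_{1\leq i,j\leq\ell},
\]
while the right-hand side becomes
\[
	\left|\,\frac{(pk)!}{j!\,(pk-j)!}\,\right|_{1\leq j,k\leq\ell} = \left|\binom{pk}{j}\right|_{1\leq j,k\leq\ell}.
\]
The latter matrix is the transpose of $|\binom{pi}{j}|_{1\leq i,j\leq\ell}$, hence has the same determinant, which proves the claim.
\end{proof}
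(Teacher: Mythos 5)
Your proposal is correct and is essentially the paper's own proof: the paper likewise specializes Proposition~\ref{prop:CD_det} to $\alpha_j = 1/j!$ and $\beta_i = (pi)!$ and reads off the binomial determinant. Your extra remark that the resulting matrix is a transpose (hence has the same determinant) is a point the paper passes over silently, but it changes nothing.
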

\begin{proof}
Taking $\alpha_j=1/j!$ and $\beta_i = (pi)!$ in Lemma~\ref{prop:CD_det} yields:
\begin{align*}
	|\frac{(pi)!}{j!}u_{pi-j}|_{1\leq i,j\leq \ell} &= 
	|\binom{pi}{j}h_{pi-j}|_{1\leq i,j\leq \ell} =  |\frac{(pi)!}{j!(pi-j)!}|_{1\leq i,j\leq \ell} = |\binom{pi}{j}|_{1\leq i,j\leq \ell}.
\end{align*}
\end{proof}

\begin{theorem}\label{thm:hpij_det}
For any $\ell\geq 1$,
\[
	|\binom{pi}{j}h_{pi-j}|_{1\leq i,j\leq \ell} = p^{\sum_{k=1}^\ell k}.
\]
\end{theorem}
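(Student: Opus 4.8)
The plan is to reduce the claimed determinant to the purely combinatorial count of Young tableaux carried out in Section~\ref{section:young}. By Corollary~\ref{corr:binom}, $|\binom{pi}{j}h_{pi-j}|_{1\leq i,j\leq \ell} = |\binom{pi}{j}|_{1\leq i,j\leq \ell}$, so it suffices to show $|\binom{pi}{j}|_{1\leq i,j\leq \ell} = p^{\sum_{k=1}^\ell k}$. For this I would invoke the result of Tonne \cite{Tonne} mentioned in the introduction: a determinant of binomial coefficients of this shape can be expanded as a sum over a family of Young tableaux, where each tableau contributes a monomial (here a power of $p$, since the only ``free parameter'' in $\binom{pi}{j}$ is the $p$ appearing in $pi$). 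The first step is therefore to state Tonne's theorem precisely in the form needed and to match its hypotheses to the matrix $\left(\binom{pi}{j}\right)_{1\leq i,j\leq\ell}$, identifying the relevant index set of tableaux with the set $T_\ell$ from Definition (the constraint $1\leq (T)_{i,j}\leq p + \sum_{k=1}^{i-1}((T)_{k,j+1}-(T)_{k,j})$ is exactly the condition that makes the corresponding binomial factor nonzero and accounts for the telescoping of row-differences).

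Second, I would argue that under Tonne's expansion each tableau $T\in T_\ell$ contributes exactly $1$ (up to sign) rather than a nontrivial polynomial, so that the determinant equals $|T_\ell|$ on the nose, and then apply Corollary~\ref{corr:tn}, which gives $|T_\ell| = p^{\sum_{k=1}^\ell k}$. Concretely, the shape $(\ell,\ell-1,\dots,1)$ in the definition of $T_n$ is precisely the staircase that arises from an $\ell\times\ell$ determinant of binomials, and the upper bound in condition~(2) is engineered so that summing $1$ over all valid fillings of block $(i,j)$ reproduces the binomial coefficient sitting in position $(i,j)$ of the matrix after the appropriate Lindström–Gessel–Viennot / Tonne unfolding. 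I would spell out this dictionary carefully, since the cleanest route is to show directly that $\sum_{T\in T_\ell} 1$ equals the determinant by induction on $\ell$ using the same gluing bijection $G$ from Proposition~\ref{prop:admiss_bij}, thereby making the paper self-contained modulo Tonne's combinatorial identity.

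Finally, I would note that once the equality $|\binom{pi}{j}|_{1\leq i,j\leq\ell} = p^{\sum_{k=1}^\ell k}$ is established, the theorem follows immediately by the chain $|\binom{pi}{j}h_{pi-j}| = |\binom{pi}{j}| = |T_\ell| = p^{\sum_{k=1}^\ell k}$. I expect the main obstacle to be the bookkeeping in the second step: translating Tonne's general statement about determinants of binomial coefficients into the specific tableaux-counting problem solved in Section~\ref{section:young}, in particular verifying that the signs all work out (the permutation-sign contributions in the Leibniz expansion must cancel against overcounting in the tableaux correspondence, leaving a positive count), and checking the edge cases where $pi - j < 0$ so that $h_{pi-j}$ and the corresponding binomial vanish — these are exactly the blocks forced to ``not exist'' in the staircase shape, so the shape $(\ell,\ell-1,\dots,1)$ rather than a full rectangle is what makes the identity come out clean.
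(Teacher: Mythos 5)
Your proposal is correct and follows essentially the same route as the paper: reduce to $|\binom{pi}{j}|_{1\leq i,j\leq\ell}$ via Corollary~\ref{corr:binom}, evaluate that binomial determinant by Tonne's theorem as a count of the tableaux from Section~\ref{section:young}, and conclude with Corollary~\ref{corr:tn}. The only cosmetic difference is bookkeeping — the paper records Tonne's count as $p^{\ell}\cdot|T_{\ell-1}|$ while you write it as $|T_{\ell}|$, and these coincide since $p^{\ell}\cdot p^{\sum_{k=1}^{\ell-1}k}=p^{\sum_{k=1}^{\ell}k}$.
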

\begin{proof}
By Corollary~\ref{corr:binom} and Theorem 1 in \cite{Tonne},
\begin{align*}
|\binom{pi}{j}h_{pi-j}|_{1\leq i,j\leq \ell} = |\binom{pi}{j}|_{1\leq i,j\leq \ell} = p^\ell\cdot |T_{n-1}|,
\end{align*}
and the theorem follows by Corollary~\ref{corr:tn}.
\end{proof}

\begin{corollary}\label{corr:hass_nonzero}
For any $\ell\geq 1$,
\[
	|u_{pi-j}|_{1\leq i,j\leq \ell} = \prod_{k=1}^\ell\frac{k!p^k}{(pk)!}\in\Zptimes.
\]
\end{corollary}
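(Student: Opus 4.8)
The plan is to deduce Corollary~\ref{corr:hass_nonzero} directly from Theorem~\ref{thm:hpij_det}. By Corollary~\ref{cor:C_tn_binom} and the relation $u_n = h_n/n!$, the matrix entry $\binom{pi}{j}h_{pi-j}$ factors as $\frac{(pi)!}{j!(pi-j)!}\cdot h_{pi-j} = \frac{(pi)!}{j!}\,u_{pi-j}$. Since the factor $(pi)!$ depends only on the row index $i$ and the factor $1/j!$ depends only on the column index $j$, pulling these out of the determinant by multilinearity gives
\begin{align*}
	|\binom{pi}{j}h_{pi-j}|_{1\leq i,j\leq\ell} = \left(\prod_{i=1}^\ell (pi)!\right)\left(\prod_{j=1}^\ell \frac{1}{j!}\right)\cdot |u_{pi-j}|_{1\leq i,j\leq\ell}.
\end{align*}

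Next I would solve for the target determinant: combining the display above with Theorem~\ref{thm:hpij_det}, which evaluates the left-hand side as $p^{\sum_{k=1}^\ell k}$, we get
\begin{align*}
	|u_{pi-j}|_{1\leq i,j\leq\ell} = p^{\sum_{k=1}^\ell k}\cdot \prod_{k=1}^\ell \frac{k!}{(pk)!} = \prod_{k=1}^\ell \frac{k!\,p^k}{(pk)!},
\end{align*}
where in the last step I absorb $p^{\sum_k k} = \prod_k p^k$ into the product. This is exactly the claimed closed form.

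Finally I would verify the $p$-adic unit claim. Each factor $\frac{k!\,p^k}{(pk)!}$ is a rational number, so it suffices to check that $\ord_p\!\left(\prod_{k=1}^\ell \frac{k!\,p^k}{(pk)!}\right) = 0$. Using Legendre's formula, $\ord_p(m!) = \frac{m - s_p(m)}{p-1}$ where $s_p(m)$ is the digit sum of $m$ base $p$; since $s_p(pk) = s_p(k)$, we get $\ord_p((pk)!) - \ord_p(k!) = \frac{pk - k}{p-1} = k$, so $\ord_p\!\left(\frac{k!\,p^k}{(pk)!}\right) = k - k = 0$ for every $k$, and the product is a $p$-adic unit. (Alternatively, one can note that $u_{pi-j}\in\Zp$ so the determinant lies in $\Zp$, and that $u_{p-1},\dots$ along a suitable arrangement are units, but the Legendre computation is cleanest.)

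I do not expect any real obstacle here: the corollary is a formal consequence of the already-established Theorem~\ref{thm:hpij_det} together with the multilinearity of the determinant, plus the elementary valuation computation. The only point requiring a little care is bookkeeping the row/column factorizations correctly — making sure the $(pi)!$ factors are indexed by rows and the $j!$ factors by columns so that they genuinely come out of the determinant — but this is routine.
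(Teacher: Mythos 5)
Your proposal is correct and follows essentially the same route as the paper: rewrite $\binom{pi}{j}h_{pi-j}=\frac{(pi)!}{j!}u_{pi-j}$, pull the row factors $(pi)!$ and column factors $1/j!$ out of the determinant, divide the value $p^{\sum_k k}$ from Theorem~\ref{thm:hpij_det} by $\prod_k (pk)!/k!$, and check the valuation vanishes (the paper uses the floor-sum form of Legendre's formula where you use the digit-sum form, which is immaterial). The only nit is that your citation of Corollary~\ref{cor:C_tn_binom} is spurious --- the factorization needs only $u_n=h_n/n!$.
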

\begin{proof}
Observe that 
\[
	\binom{pi}{j}h_{pi-j} = \frac{(pi)!}{j!(pi-j)!}h_{pi-j} = \frac{(pi)!}{j!}u_{pi-j},
\]
so that $u_{pi-j} = \frac{\binom{pi}{j}h_{pi-j}}{\frac{(pi)!}{j!}}$. Then, by Theorem~\ref{thm:hpij_det}, 
\[
	|u_{pi-j}|_{1\leq i,j\leq \ell} = \frac{|\binom{pi}{j}h_{pi-j}|_{1\leq i,j\leq \ell}}{\prod_{i=1}^\ell \frac{(pi)!}{i!}}=\frac{p^{\sum_{k=1}^\ell k}}{\prod_{i=1}^\ell \frac{(pi)!}{i!}}=\prod_{k=1}^\ell\frac{k!p^k}{(pk)!},
\]
and so since
\begin{align*}
\ordp_p\left( \frac{k!p^k}{(kp)!}\right)= \sum_{j=1}^\infty\floor{\frac{k}{p^j}}+k-\sum_{j=1}^\infty\floor{\frac{kp}{p^j}}=0,
\end{align*}
the claim follows.
\end{proof}



\begin{thebibliography}{10}\footnotesize

\bibitem{Allouche}
Jean-Paul Allouche and Jeffrey Shallit,
{\it Automatic Sequences: Theory,  Applications,  Generalizations},
Cambridge University Press, Cambridge, 2003. 

\bibitem{KS}
 Kiyomi Kanesaka and Koji Sekiguchi,
Representation of Witt vectors by formal power series and its applications,
{\it Tokyo J. Math.} \textbf{2}(2) (1979), 349--370.

\bibitem{KConrad}
Keith Conrad,
Artin-Hasse-type series and roots of unity,
\url{https://kconrad.math.uconn.edu/blurbs/gradnumthy/AHrootofunity.pdf}.

\bibitem{Koblitz}
Neal Koblitz,
{\it $p$-adic Numbers,  $p$-adic Analysis, and Zeta-functions},
Springer-Verlag, New York, 1984.

\bibitem{Kracht}
Darci Kracht,
Applications of the Artin-Hasse Exponential Series and Its Generalizations to Finite Algebra Groups,
PhD Thesis,  \url{https://etd.ohiolink.edu/} (2011).

\bibitem{Thakur}
Dinesh S. Thakur,
Automata methods in transcendence,
\textit{$t$-Motives: Hodge Structures, Transcendence and Other Motivic Aspects}, (2020), 351--372.

\bibitem{Tonne}
P. C. Tonne,
A regular determinant of binomial coefficients,
\textit{Proc. Amer. Math. Soc.} \textbf{41} (1973), 17--23.

\end{thebibliography}
\end{document}